\newtheorem{theorem}{Theorem}
\newtheorem{lemma}[theorem]{Lemma}
\newtheorem*{cor}{Corollary}
\newtheorem*{thm}{Theorem}
\begin{document}

\title{Seifert fibred 2-knot manifolds. II}

\author{Jonathan A. Hillman }
\address{School of Mathematics and Statistics\\
     University of Sydney, NSW 2006\\
      Australia }
\email{jonathan.hillman@sydney.edu.au}

\begin{abstract}
We show that if $B$ is an aspherical 2-orbifold in one of the families known to have orbifold fundamental groups of weight 1 then $B$ is the base of a Seifert fibration of a 2-knot manifold $M(K)$.
\end{abstract}

\keywords{ knot manifold, orbifold, Seifert fibration, weight}

\subjclass{57Q45}

\maketitle
A 4-manifold $M$ is Seifert fibred over a 2-orbifold base $B$
if there is an orbifold fibration $p:M\to{B}$ with general fibre a torus $T$.
The knot manifold $M(K)$ associated to an $n$-knot $K$ is the closed $(n+2)$-manifold obtained by elementary surgery on the knot.
(When $n=1$ we assume the surgery is 0-framed.)

This note is a continuation of the paper \cite{HH13},
in which the work of the second author on the Scott-Wiegold conjecture \cite{Ho}
was used to constrain the possible orbifold bases 
of Seifert fibrations of knot manifolds.
The constraints were summarised in \cite[Theorem 4.2]{HH13},
which asserts the following:

\begin{thm}
Let $K$ be a $2$-knot with group $\pi=\pi{K}$,
and such that the knot manifold
$M=M(K)$ is Seifert fibred, with base orbifold $B$.
If $\pi'$ is infinite then $M$ is aspherical and $B$ is either
 \begin{enumerate}
\item$S^2(a_1,\dots,a_m)$, with $m\geq3$, 
no three of the cone point orders $a_i$ have a nontrivial common factor, 
at most two disjoint pairs each have a common factor, and trivial monodromy; or
\item$P^2(b_1,\dots,b_n)$ with $n=2$ or $3$,
the cone point orders $b_i$ being pairwise relatively prime, and $b_1=2$ if $n=3$,
or  $P^2(3,4,5)$,
and monodromy of order $2$ and non-diagonalizable; or
\item$\mathbb{D}(c_1\dots,c_p,\overline{d_1},\dots,\overline{d_q})$,
with the cone point orders $c_i$ all odd and relatively prime,
and at most one of the $d_j$ even,  $p\leq2$ and $2p+q\geq3$, 
and monodromy of order $2$ and diagonalizable.
\end{enumerate}
All such knot manifolds are mapping tori,
and are geometric.
\qed
\end{thm}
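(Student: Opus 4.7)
The strategy is to extract constraints on $B$ and the monodromy from three basic properties of a $2$-knot manifold $M=M(K)$: $H_1(M)\cong\mathbb{Z}$, $\chi(M)=0$, and the fact that $\pi=\pi K$ is normally generated by the meridian (``weight $1$''). A Seifert fibration with general fibre a torus $T$ gives a short exact sequence
\[
1\to\pi_1(T)\to\pi\to\pi^{orb}(B)\to1,
\]
with $\pi_1(T)\cong\mathbb{Z}^2$, together with a monodromy representation $\pi^{orb}(B)\to GL(2,\mathbb{Z})$ describing the action on $H_1(T)$. Since weight $1$ is inherited by quotients, $\pi^{orb}(B)$ is itself normally generated by a single element.

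First I would rule out bases that are not aspherical. If $B$ is spherical then $\pi^{orb}(B)$ is finite, so $\pi$ is virtually $\mathbb{Z}^2$; this is incompatible with $\pi/\pi'\cong\mathbb{Z}$ together with $\pi'$ infinite. Hence $B$ is aspherical, and so is $M$, being a torus Seifert fibration over an aspherical base. The list of closed aspherical $2$-orbifolds is short: a closed surface of non-positive Euler characteristic, or one of the three cone/reflector families listed in the statement. The surface cases without cone points or corner reflectors are then eliminated by $H_1(M)\cong\mathbb{Z}$, which would otherwise force a non-cyclic abelianization of $\pi$.

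The core of the argument is Howie's theorem on the Scott--Wiegold conjecture \cite{Ho}: a free product of three or more nontrivial cyclic groups is not the normal closure of a single element. In each of the three remaining families one can produce a free product of cyclics as a suitable quotient of $\pi^{orb}(B)$, by killing certain cone-point or corner-reflector relators. Howie's theorem then bounds the numbers of cone points and corner reflectors and yields the coprimality conditions in the statement. The main obstacle will be the bookkeeping across the three families: identifying the right quotient in each case, handling the orientation-reversing elements in cases (2) and (3), and treating the exceptional orbifold $P^2(3,4,5)$ separately.

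Finally I would analyze the monodromy and extract the mapping-torus and geometric conclusions. The monodromy factors through a small quotient of $\pi^{orb}(B)$, and the condition $H_1(M)\cong\mathbb{Z}$ then forces it to be trivial in case (1) and of order $2$ in cases (2) and (3). The diagonalizability distinction reflects whether orientation-reversing loops fix a line in $H_1(T)$, and this is exactly what distinguishes $P^2$-type from $\mathbb{D}$-type bases. The derived structure of $\pi$, together with $\pi/\pi'\cong\mathbb{Z}$, then provides a splitting $\pi\cong\pi'\rtimes\mathbb{Z}$, so $M$ is a mapping torus; and geometricity follows from the classification of aspherical Seifert-fibred $4$-manifolds with aspherical base orbifold, each such admitting one of the relevant Thurston geometries.
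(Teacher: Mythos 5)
You should first note that this paper does not actually prove the statement: it is quoted verbatim from \cite[Theorem 4.2]{HH13}, with the \(\qed\) indicating that the proof lives in that reference, and the only indication given here of how it goes is the remark that Howie's resolution of the Scott--Wiegold conjecture \cite{Ho} was used to constrain the possible bases. At the level of strategy your outline does track that: weight 1 passes to the quotient $G=\pi^{orb}B$, and one rules out configurations of cone points and reflector data by mapping $G$ onto free products of cyclic groups and invoking \cite{Ho} (together with the requirement that $G/G'$ be cyclic). So your approach is the right one in outline.

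As a proof, however, the proposal has genuine gaps, because essentially all of the specific content of the theorem is asserted rather than derived. (i) The asphericity step is too quick: ``$\pi$ virtually $\mathbb{Z}^2$ is incompatible with $\pi/\pi'\cong\mathbb{Z}$ and $\pi'$ infinite'' is not obvious and needs an argument about $2$-knot groups with infinite virtually cyclic commutator subgroup (e.g.\ via $\Delta(1)=\pm1$ or the structure of two-ended $\pi'$), and bad (non-good) orbifold bases are not addressed at all. (ii) The heart of the theorem is exactly the ``bookkeeping'' you defer: the conditions ``no three $a_i$ with a common factor and at most two disjoint pairs with common factors'', ``$c_i$ odd and relatively prime, at most one $d_j$ even, $p\leq2$, $2p+q\geq3$'', and especially ``$b_1=2$ if $n=3$, or $P^2(3,4,5)$'' do not follow from the blanket sentence that Howie's theorem ``yields the coprimality conditions''; each requires exhibiting an explicit quotient that is a free product of three nontrivial cyclic groups or has non-cyclic abelianization, and the $P^2$ analysis with its exceptional case is one of the delicate points. (iii) The monodromy conclusions are not forced by $H_1(M)\cong\mathbb{Z}$ alone, as you claim: one must show the monodromy preserves the flag $0<A\cap\pi'<A$ (where $A$ is the image of the fibre group), acts trivially on $A/(A\cap\pi')$ because that quotient injects into $H_1(M)$, is constrained on $A\cap\pi'$ by orientability, and then run through the finite-order upper-triangular classes in $GL(2,\mathbb{Z})$, namely $\pm I_2$, $\pm\left(\begin{smallmatrix}1&0\\0&-1\end{smallmatrix}\right)$, $\pm\left(\begin{smallmatrix}1&1\\0&-1\end{smallmatrix}\right)$, attaching ``trivial'', ``order 2 diagonalizable'' and ``order 2 non-diagonalizable'' to the correct families; your sentence about orientation-reversing loops fixing a line is a guess, not an argument. (iv) The inference ``$\pi\cong\pi'\rtimes\mathbb{Z}$, so $M$ is a mapping torus'' is invalid: every knot manifold group splits in this way, yet knot manifolds need not fibre over $S^1$; here one needs that $\pi'$ is the fundamental group of a closed (Seifert fibred) $3$-manifold, i.e.\ a $PD_3$-group, together with a fibration criterion for aspherical $4$-manifolds, before geometricity can be quoted from the classification of Seifert fibred $4$-manifolds.
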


The lower bounds are to ensure asphericity.
In case (3), we should also exclude $\mathbb{D}(3,\overline2)$
and $\mathbb{D}(\overline2,\overline3,\overline5)$,
which are quotients of $S^2$.
All these groups are the groups of knots 
in homology 4-spheres.

Our concern here is with showing that the 2-orbifolds known 
to have groups of weight 1 are bases for Seifert fibrations 
of 2-knot manifolds.
(The corresponding problem for 1-knots remains open.)
We shall limit our consideration of the question 
``when does $G=\pi^{orb}B$ have weight 1?" to outlining in \S4 why 
we think that most of the possible Seifert bases are known.

\section{some general results}

We shall show here that an aspherical 2-knot manifold 
is Seifert fibred if and only if it is the total space 
of an $S^1$-bundle over a Seifert fibred 3-manifold.

\begin{lemma}
Let $\nu$ be a group with $\nu/\nu'\cong\mathbb{Z}$
and with a central subgroup $\kappa\cong\mathbb{Z}$
such that $G=\nu/\kappa$ has weight $1$. 
If $G/G'\cong\mathbb{Z}/e\mathbb{Z}$, where $e\leq4$ or $e=6$, 
then $\nu$ has weight $1$.
\end{lemma}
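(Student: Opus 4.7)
My plan is to choose a weight element $g$ of $G$, lift it to some $\tilde g\in\nu$, and show that after adjusting the lift by an element of the central subgroup $\kappa$, the normal closure $N=\langle\langle\tilde g\rangle\rangle_\nu$ equals $\nu$.

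First I would identify the image of $\kappa$ in the abelianization. Since $\kappa$ is central, the five-term homology exact sequence for $1\to\kappa\to\nu\to G\to 1$ gives exactness of $\kappa\to\nu/\nu'\to G/G'\to 0$, so the image of $\kappa$ in $\nu/\nu'\cong\mathbb{Z}$ is precisely $e\mathbb{Z}$. Consequently, replacing $\tilde g$ by $\tilde g z$ with $z\in\kappa$ shifts the image of $\tilde g$ in $\mathbb{Z}$ by an arbitrary multiple of $e$, without altering its image in $G$.

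Since $g$ normally generates $G$, its image in $G/G'\cong\mathbb{Z}/e\mathbb{Z}$ is a unit; so the image of any lift $\tilde g$ in $\mathbb{Z}=\nu/\nu'$ is an integer $k$ coprime to $e$. The arithmetic hypothesis enters here: the integers $e\in\{1,2,3,4,6\}$ are exactly those for which $(\mathbb{Z}/e\mathbb{Z})^\times=\{\pm 1\}$, so $k\equiv\pm 1\pmod e$, and by shifting $\tilde g$ by a suitable element of $\kappa$ one may assume its image in $\nu/\nu'$ is $\pm 1$, i.e.\ a generator of $\mathbb{Z}$. This step is the crux, and the main obstacle to removing the hypothesis on $e$: for $e=5$ or $e\geq 7$ a unit need not be congruent to $\pm 1$.

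With such a lift chosen, $N$ surjects onto $G$ because its image in $G$ contains the normal closure of $g$; hence $\nu=N\kappa$ and $\nu/N$ is a cyclic quotient of $\kappa$, in particular abelian, so $\nu'\subseteq N$. Thus $\nu/N$ is a cyclic quotient of $\nu/\nu'\cong\mathbb{Z}$ in which the image of $\tilde g$ is trivial; but that image is a generator, so $\nu/N=1$ and $\tilde g$ normally generates $\nu$. Everything apart from the number-theoretic step in the previous paragraph is a routine diagram chase.
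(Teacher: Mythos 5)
Your argument is correct and is essentially the paper's: both proofs lift a weight element of $G$ and use the fact that for $e\le 4$ or $e=6$ every unit of $\mathbb{Z}/e\mathbb{Z}$ is $\pm1$ to adjust the lift by an element of $\kappa$ (whose image in $\nu/\nu'$ is $e\mathbb{Z}$) so that it generates $\nu/\nu'\cong\mathbb{Z}$, and then conclude via $\nu=N\kappa$, $\nu/N$ abelian, that the adjusted lift normally generates $\nu$. The only difference is presentational (the paper writes $r=se\pm1$, $v=uk^{-s}$ and treats $e=0$ separately), so nothing of substance is missing.
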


\begin{proof}
Let $t,u\in\nu$ represent a generator for $\nu/\nu'$ and 
a weight element for $G$, repectively,
and let $C=\langle\langle{u}\rangle\rangle$ 
be the normal closure of $u$ in $\nu$.
Then $\nu=C\kappa$.
If $e=0$ then $H_1(p;\mathbb{Z})$ is an isomorphism,
and so $\nu/C$ has trivial abelianization.
On the other hand, $\nu/C$ is a quotient of $\kappa$, 
and so is abelian.
Hence it is trivial, and so $\nu$ has weight 1.

If $e\not=0$ then $\nu'\cap\kappa=1$, 
so  $\nu'\leq{C}$.
Since $u$ normally generates $G$, 
$u\in{t^r}\nu'$ for some $r$ such that $(e,r)=1$,
while $\kappa$ is generated by some $k\in{t^e}\nu'$.
The hypothesis on $e$ implies that we may write $r=se\pm1$ for some $s$.
Let $v=uk^{-s}$. Then $v$ is a weight element for $\nu$.
\end{proof}

We do not know whether the restriction on $e$ is necessary.

A similar argument shows that an extension $E$ of a group $G$ 
by a solvable normal subgroup $S$ has weight $1$ if $E/E'\cong{G/G'}$ 
and $G$ has weight $1$.

\begin{theorem}
Let $M$ be an aspherical closed orientable $4$-manifold with $H_1(M;\mathbb{Z})\cong\mathbb{Z}$.
Then $M$ is Seifert fibred with general fibre a torus 
if and only if $M$ is the total space of an $S^1$-bundle 
over an aspherical Seifert fibred $3$-manifold $N$ with
$H_1(N;\mathbb{Z})\cong\mathbb{Z}$ and Euler class a generator of 
$H^2(N;\mathbb{Z}^{w_1(N)})$.
\end{theorem}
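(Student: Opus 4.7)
The plan is to prove the two directions separately, invoking the Theorem above (Theorem 4.2 of \cite{HH13}) to constrain the monodromy in the harder ``only if'' direction.

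For the ``if'' direction, asphericity of $M$ follows from that of $N$: the universal cover $\widetilde M$ is an $\mathbb{R}$-bundle over $\widetilde N$, which is contractible. The Gysin sequence for the $S^1$-bundle $M\to N$, together with the hypothesis that the Euler class is a generator of $H^2(N;\mathbb{Z}^{w_1(N)})$, yields $H_1(M;\mathbb{Z})\cong H_1(N;\mathbb{Z})=\mathbb{Z}$. Composing $M\to N$ with the Seifert fibration $N\to B$ gives a map $M\to B$ whose generic fibre is the restriction of the $S^1$-bundle to a generic $S^1$-fibre of $N$, hence a torus; the orbifold Seifert charts over the cone points of $B$ pull back through the $S^1$-bundle to Seifert models for $M\to B$.

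For the ``only if'' direction, set $\pi=\pi_1(M)$, so $\pi$ fits in the short exact sequence $1\to A\to\pi\to\pi^{orb}B\to1$ with $A\cong\mathbb{Z}^2$. The central task is to produce a primitive normal subgroup $\kappa\cong\mathbb{Z}$ of $A$ contained in $\pi'$. I take $\kappa=\ker(A\to H_1(M;\mathbb{Z}))$: the abelianisation $H_1(\pi^{orb}B)$ is finite in each of the three families of the Theorem, so the five-term exact sequence forces the image of $A$ in $H_1(M)=\mathbb{Z}$ to have rank one, whence $\kappa$ is primitive of rank one and automatically contained in $\pi'$. As the kernel of a natural map, $\kappa$ is characteristic in $A$, hence normal in $\pi$.

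The quotient $G=\pi/\kappa$ fits in the extension $1\to\mathbb{Z}\to G\to\pi^{orb}B\to1$ and is the fundamental group of an aspherical Seifert fibred $3$-manifold $N$ over $B$, with $H_1(N;\mathbb{Z})=\mathbb{Z}$ (from $\kappa\subset\pi'$). The extension $1\to\kappa\to\pi\to G\to1$, with $G$ acting on $\kappa$ through the $\pm1$ character induced by the monodromy, classifies $M$ as a (possibly non-principal) $S^1$-bundle over $N$ whose Euler class lies in $H^2(N;\mathbb{Z}^{w_1(N)})$; primitivity of $\kappa$ in $\pi$ forces this class to be a generator. I expect the main obstacle to be verifying that the monodromy character on $\kappa$ agrees with $w_1(N)$, particularly in case (3) of the Theorem, where the base $\mathbb{D}$ is non-orientable and the diagonalisable order-$2$ monodromy produces a non-principal $S^1$-bundle over a non-orientable $N$; in the other cases $\kappa$ is central in $\pi$, the bundle is principal, $N$ is orientable, and the argument is more routine.
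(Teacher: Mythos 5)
Your skeleton is the same as the paper's: your $\kappa=\ker(A\to H_1(M;\mathbb{Z}))$ is exactly $A\cap\pi'$, and the paper likewise passes to $\sigma=\pi/(A\cap\pi')$, realizes $\sigma$ as $\pi_1(N)$ for a Seifert fibred $3$-manifold $N$ over $B$, and exhibits $M$ as an $S^1$-bundle over $N$ with Euler class a generator; the converse direction (composing the bundle projection with a Seifert fibration of $N$) is also the paper's. The genuine gap is at the crux of the ``only if'' direction: you simply assert that $\pi/\kappa$, being an extension of $\pi^{orb}B$ by $\mathbb{Z}$, ``is the fundamental group of an aspherical Seifert fibred $3$-manifold over $B$''. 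A general such extension need not be: $\pi^{orb}B\times\mathbb{Z}$ is an extension of $\pi^{orb}B$ by a central $\mathbb{Z}$, and it has torsion because $B$ has cone points. What must be proved is that $\pi/\kappa$ is torsion free, and this is exactly where the paper does its work: a finite-order element of $\sigma$ has finite-order image in $\pi^{orb}B$, hence is conjugate to a power of a cone point, reflector curve or corner point generator by Theorem 4.8.1 of \cite{ZVC}, and this is played off against $\sigma/\sigma'\cong\mathbb{Z}$ (torsion must lie in $\sigma'$) to force triviality. Nothing in your proposal supplies this step, and without it there is no $3$-manifold $N$.

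Two further points. First, ``primitivity of $\kappa$ forces the Euler class to be a generator'' is not the right mechanism: primitivity of $\kappa$ inside $A$ is compatible with any Euler number. What forces a generator is that $H_1(M;\mathbb{Z})\cong\mathbb{Z}$ is torsion free together with $\kappa\leq\pi'$, run through the five-term (Gysin) sequence of $1\to\kappa\to\pi\to\sigma\to1$ and Poincar\'e duality for $N$; this same computation is how the paper identifies the conjugation action of $\sigma$ on $\kappa$ with $w_1(N)$ — the step you explicitly leave open as ``the main obstacle'', so it remains missing, in particular in your non-orientable case (3) and in case (2), where your assertion that $\kappa$ is central and $N$ orientable is itself unverified. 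Second, quoting the classification of \cite{HH13} to conclude $H_1(\pi^{orb}B)$ is finite imports hypotheses ($M=M(K)$ with $\pi'$ infinite) that Theorem 2 does not assume; the paper's own phrase ``since $G$ has weight $1$'' leans on the same context, so this is a shared blemish rather than an error peculiar to you, but under the stated hypotheses one still owes an argument that the image of $A$ in $H_1(M)\cong\mathbb{Z}$ is nontrivial. Note also that the paper obtains the $S^1$-bundle structure on $M$ geometrically, by factoring the Seifert fibration $p:M\to B$ through a submersion onto $N$, rather than from the group extension alone as you do.
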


\begin{proof}
Suppose first that $p:M\to{B}$ is a Seifert fibration 
with general fibre a torus.
Let $\pi=\pi_1(M)$ and let $A$  
be the image of the fundamental group of a general fibre of $p$.
Then $A\cong\mathbb{Z}^2$,
since $M$ is aspherical, $A$ is normal in $\pi$ and
$G=\pi/A\cong\pi^{orb}B$.
Since $G$ has weight 1 the abelianization $G/G'$ is finite, 
and so $A\cap\pi'\cong\mathbb{Z}$. 
Let $\sigma=\pi/A\cap\pi'$
and let $w:\sigma=\pi/A\cap\pi'\to\mathbb{Z}^\times$ 
be the homomorphism corresponding to the action of $\pi$ on $A\cap\pi'$
by conjugation.
Then $\sigma/\sigma'\cong\mathbb{Z}$,
and $\sigma$ is a central extension of $G$ by $\mathbb{Z}$.

Suppose $g\in\sigma$ has finite order. Then 
the image of $g$ in $G$ has finite order also, 
and so is conjugate to a power of a generator associated to a cone point or a reflector curve or corner point of $B$, by Theorem 4.8.1 of \cite{ZVC}.
But then either $g\in{A}\cap\pi'$ or
the image of $g$ is not in $G'$,
and so $g\not\in\sigma'$.
Hence $\sigma$ is torsion free,
and so it is the fundamental group of an
aspherical Seifert fibred 3-manifold $N$.
The projection $p$ of $M$ onto $B$ factors through a submersion onto $N$ with fibre $S^1$, and so $M$ is the total space of an $S^1$-bundle over $N$.

It follows from the Gysin sequence for $\pi$ as an extension of $\sigma$
by $\mathbb{Z}$ and Poincar\'e duality for $N$ that
$N$ is an homology handle,
the action of $\sigma$ on $A\cap\pi'\cong\mathbb{Z}$ is given by $w_1(N)$
and the Euler class of the extension generates $H^2(N;\mathbb{Z}^{w_1(N)})$.

Conversely, if $q:M\to{N}$ is the projection of an $S^1$-bundle over 
an aspherical $3$-manifold $N$ with
$H_1(N;\mathbb{Z})\cong\mathbb{Z}$ and Euler class a generator of 
$H^2(N;\mathbb{Z}^{w_1(N)})$ then $M$ is aspherical,
orientable and $H_1(M;\mathbb{Z})\cong\mathbb{Z}$.
If $r:N\to{B}$ is a Seifert fibration then the composite $p=rq$ is a Seifert fibration with general fibre a torus.
\end{proof}

The 3-manifold $N$ need not be orientable, 
as there are examples whose base orbifolds have reflector curves.
The correspondance between the 4-manifolds $M$ and 3-manifolds $N$ 
is bijective.

\begin{cor}
Let $M$ be as above.
Then $M\cong{M(K)}$ for some $2$-knot $K$ 
if and only if $\pi_1(N)$ has weight $1$.
\end{cor}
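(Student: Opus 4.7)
The plan is to prove the two implications separately. The forward direction is immediate: if $M\cong M(K)$, then a meridian of $K$ normally generates $\pi=\pi_1(M)$, and its image under the quotient $\pi\to\sigma=\pi_1(N)$ (the projection $\pi\to\pi/\kappa$ constructed in the proof of the theorem) remains a weight element, so $\sigma$ has weight $1$.

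For the converse I proceed in two stages. First, I lift weight $1$ from $\sigma$ to $\pi$ via the lemma. By the theorem, $\pi$ is a central extension of $\sigma$ by $\kappa\cong\mathbb{Z}$; since $H_1(M;\mathbb{Z})=\mathbb{Z}$ and $N$ is a homology handle, both $\pi/\pi'$ and $\sigma/\sigma'$ are infinite cyclic, placing us in the $e=0$ case of the lemma. Weight $1$ for $\sigma$ therefore yields weight $1$ for $\pi$, and inspection of the proof of the lemma shows that the weight element $w$ can be chosen to map to a generator of $\pi/\pi'$.

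Second, I realize $w$ by an embedded oriented loop $\gamma\subset M$; since $M$ is orientable, $\gamma$ has a tubular neighborhood of the form $S^1\times D^3$. I then perform surgery on $\gamma$, replacing $S^1\times D^3$ by $D^2\times S^2$, to obtain a closed orientable $4$-manifold $M^*$. Van Kampen gives $\pi_1(M^*)=\pi/\langle\langle w\rangle\rangle=1$. Since $M$ fibres by circles over the aspherical $3$-manifold $N$ we have $\chi(M)=0$, so $\chi(M^*)=2$. Poincar\'e duality together with the universal coefficient theorem then force $H_*(M^*;\mathbb{Z})\cong H_*(S^4;\mathbb{Z})$, and Freedman's classification gives $M^*\cong S^4$ topologically. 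The co-core of the glued $D^2\times S^2$ is a locally flat $2$-sphere $K\subset M^*=S^4$, and reversing the surgery recovers $M$, so $M\cong M(K)$.

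The main obstacle is the last homological step: promoting ``simply connected with $\chi=2$'' to ``homeomorphic to $S^4$'' requires working in the topological category and invoking Freedman. The rest (choice of framing, local flatness of the co-core, naturality of surgery and its inverse) is standard and can be disposed of briefly, since the framing ambiguity $\pi_1(SO(3))=\mathbb{Z}/2$ does not affect $\pi_1$ or $\chi$ of $M^*$, and $w$ primitive in $\pi/\pi'$ ensures $\gamma$ is homologically well-behaved.
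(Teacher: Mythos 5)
Your proof is correct and follows essentially the same route as the paper: the forward direction by passing the meridian to the quotient, and the converse by lifting weight $1$ from $\pi_1(N)$ to $\pi_1(M)$ (the $e=0$ case of the Lemma) and then surgering a curve representing a weight element to obtain a simply connected $4$-manifold with $\chi=2$, hence $S^4$ by Freedman, with the cocore giving the $2$-knot. Your added remarks on framings and the explicit homological bookkeeping merely spell out details the paper leaves implicit.
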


\begin{proof}
If $M\cong{M(K)}$ then $\pi$ has weight 1, 
and so $\nu=\pi/A\cap\pi'$ has weight 1.
Conversely, if $\nu$ has weight 1 then so does $\pi$.
The result of surgery on a simple closed curve in $M$
representing a normal generator of $\pi$ is a 1-connected 4-manifold 
$\Sigma$ with  $\chi(\Sigma)=2$.
Hence $\Sigma$ is homeomorphic to $S^4$, and the cocore of
the surgery is a 2-knot $K$ with $M(K)\cong{M}$.
\end{proof}

Thus it shall suffice to show that each of the orbifolds $B$ under
consideration is the base of a Seifert fibration of a 3-manifold
with fundamental group having abelianization $\mathbb{Z}$ and weight 1.

\section{examples}

When the numbers of cone points are small the groups $G=\pi^{orb}B$ corresponding to the 2-orbifolds in \cite[Theorem 4.2]{HH13} have weight 1.
This is so in case (1) when $m=3$, 
or when $m=4$ and $(a_1,a_2)=(a_3,a_4)=1$,
in case (2) when $n=2$, and in case (3) when $p=2$ and $q=0$,
or when $p\leq1$.
In case (3), if $p=0$ all such orbifolds are the bases of Seifert fibrations of $M(K)$, for $K$ the 2-twist spin of some Monesinos knot.
We shall show that in each of the other subcases just listed $B$ 
is the base of a Seifert fibration of an aspherical 3-manifold,
and so is the base of Seifert fibrations of 2-knot manifold
$M(K)$ for some 2-knot $K$, by the corollary to Theorem 2.
We would also like to know which extensions of $G$ by $\mathbb{Z}$ 
have weight 1,
but we are limited by our ignorance of the possible weight elements 
for such $G$.

All the orbifolds considered below are hyperbolic,
with the exception of $S^2(2,3,6)$, which is flat,
and Seifert fibred 3-manifolds with such bases are aspherical.

A finitely generated group has infinite cyclic abelianization 
if and only if the first two elementary ideals of the matrix of exponents 
in the relators are $E_0=0$ and $E_1=(1)$.

\medskip
$S^2(p,q,r)$.
If a 3-manifold $N$ is Seifert fibred over $S^2(p,q,r)$ then
$\nu=\pi_1(N)$ is torsion free and has a presentation
\[
\langle{t,u,v,w}\mid{u^p=t^a},~v^q=t^b,~w^r=t^c,~uvw=t^d,~t~central
\rangle
\]
We may assume that $d=0$, after replacing $z$ by $zt^{-d}$, if necessary.
It is then easy to see that 
\[
E_0=(aqr+bpr+cpq)\quad\mathrm{and}\quad{E_1=(pq,pr,qr,aq,ar,bp,br,cp,cq)}.
\]
Since $\nu$ is torsion free, $(a,p)=(b,q)=(c,r)=1$,
and so $E_1=(p,q,r)$.
If also $\nu/\nu'\cong\mathbb{Z}$ then
$aqr+bpr+cpq=0$ and $(p,q,r)=1$.
These conditions imply that $p|qr$, $q|pr$ and $r|pq$.
We may write $p=p_qp_r$, $q=p_qq_r$ and $r=\pm{p_r}q_r$, 
where $(p_q,p_r)=(p_q,q_r)=(p_r,q_r)=1$.
The condition $E_0=0$ becomes $aq_r+bp_r+cp_q=0$.

If these conditions hold, 
it is easy to see that $u^{-1}v$ represents 
a weight element for the orbifold group $G$;
it represents a weight element for $\nu$ 
if and only if $(aq-bp,2ar+cp,2br+cq)=1$.

Given $p,q,r$ such that $(p,q,r)=1$, while $p|qr$, $q|pr$ and $r|pq$,
we may assume that $p$ is odd.
Let $a_o,b_o$ be any solution of $aq_r-bp_r=1$.
Since $(p_q,2p_rq_r)=1$, there is an $x$ such that
$a_oq_r+b_op_r+2xp_rq_r\equiv{p_q}(p_q+1)$ {\it mod} $(p_q^2)$.
Let $a=a_o+xp_r$, $b=b_o+xq_r$ and $c=-\frac{a_oq_r+b_op_r+2xp_rq_r}{p_q}$.
Then $aq_r+bp_r+cp_q=0$,
$aq_r-bp_r=1$ and $aq_r+bp_r\equiv0$ {\it mod} $(p_q)$,
and so $(a,p)=(b,q)=(c,r)=1$.
Moreover  $(aq-bp,2ar+cp,2br+cq)=1$.
Adjoining the relation $u=v$ gives $w=u^{-2}$, and the presentation 
collapses to $\langle{t,u}\mid{u^p=t^a},~u^q=t^{-b},~u^{2r}=t^{-c},~tu=ut\rangle$,
which gives an abelian group of order $(aq-bp,2ar+cp,2br+cq)=1$.
Thus $u^{-1}v$ is a weight element for the extension
corresponding to this choice of exponents $a,b,c$.

\medskip
$S^2(p,q,r,s)$ [with $[(p,q)=(t,s)=1$].
If a 3-manifold $N$ is Seifert fibred over $S^2(p,q,r,s)$ then
$\nu=\pi_1(N)$ is torsion free and has a presentation
\[
\langle{t,u,v,w,x}\mid{u^p=t^a},~v^q=t^b,~w^r=t^c,~x^s=t^d,~
uvwx=t^e,~t~central
\rangle
\]
We may assume that $e=0$.
We then have 
\[
E_0=(aqrs+bprs+cpqs+dpqr)
\]
and 
\[E_1=(pqr,pqs,prs,qrs,aqr,aqs,ars,bpr,bps,brs,
\]
\[cpq,cps,cqs,dpq,dpr,dqr).
\]
Since $\nu$ is torsion free,
$(a,p)=(b,q)=(c,r)=(d,s)=1$,
and so $E_1=(pq,pr,ps,qr,qs,rs)$.

If, moreover, $(p,q)=(r,s)=1$ then we may write $p=p_rp_s$ and $q=q_rq_s$, 
with $r=\epsilon_rp_rq_r$
and $s=\epsilon_sp_sq_s$, where $|\epsilon_r|=|\epsilon_s|=1$.
The condition $E_0=0$ then becomes
$\epsilon_r\epsilon_s(aq+bp)+cs+dr=0$,
while $E_1=(1)$.
Given such $p,q,r,s$, we may choose $a,b,c,d$ so that $aq+bp=1$
and $cs+dr=-\epsilon_r\epsilon_s$.
The image of $uv$ is a weight element 
for the corresponding extension.

\medskip
$P^2(p,q)$.
If a 3-manifold $N$ is Seifert fibred over $P^2(p,q)$ then
$\nu=\pi_1(N)$ is torsion free and has a presentation
\[
\langle{t,u,v,w}\mid{u^2=vwt^k},~v^p=t^e,~w^q=t^f,~t~central
\rangle
\]
We may assume that $k=0$, after replacing $w$ by $wt^k$, if necessary.
Clearly $E_0=0$, while $E_1=(2pq,2pf,2qe,pf+qe)$.
Thus if $\nu/\nu'\cong\mathbb{Z}$ then
$\nu$  is torsion free if and only if $(p,e)=(q,f)=1$,
and then we must have $E_1=(2, pf+qe)=1$.
Hence $(p,q)=1$ and either $p$ or $q$ is even or $e+f$ is odd.

Given $(p,q)=1$, we may choose $e,f$ such that $pf-qe=1$.
In this case $u^{-1}v$ represents a weight element 
for the corresponding extension.

\medskip
$\mathbb{D}(a,\overline{d_1},\dots,\overline{d_q})$.
If a 3-manifold $N$ is Seifert fibred over $\mathbb{D}(a,\overline{d_1},\dots,\overline{d_q})$ then
$\nu=\pi_1(N)$ is torsion free and has a presentation
\[
\langle{t,u,x_1,\dots,x_{q+1}}\mid{u^a=t^e},~x_i^2=t^{g_i},
~(x_ix_{i+1})^{d_i}=t^{f_i},~\forall~i\leq{q},
\]
\[x_{q+1}u=ux_1t^h,~t~central\rangle
\]
We may assume that $g_i=1$, for all $i\leq{q}$, and $h=0$.
If $\nu/\nu'\cong\mathbb{Z}$ then $a$ is odd, and if also
$\nu$ is torsion free then $(a,e)=1$ and $(d_i,f_i)=1$,
for all $i\leq{q}$.

Given an odd integer $a$, if $e=\frac12(a-1)$ then $a-2e=1$,
and so adjoining the relation $u=x_1$ trivializes the group.
In this case $u^{-1}x_1$ represents a weight element 
for the corresponding extension.

\medskip
$\mathbb{D}(a,b)$.
If a 3-manifold $N$ is Seifert fibred over $\mathbb{D}(a,b)$ then
$\nu=\pi_1(N)$ is torsion free and has a presentation
\[
\langle{t,u,v,w}\mid{u^a=t^e},~v^b=t^f,~w^2=t^g,~
wuv=uvwt^h,~t~central\rangle
\]
We may assume that $g=1$ and $h=0$.
If this group has weight 1 then $a,b$ are odd and $(a,b)=1$.
Adjoining the relation $u=wv$ gives an abelian group of order 
$2(af-be)+ab$.

Given $a,b$ odd and $(a,b)=1$, 
we may choose $e,f$ such that $2(af-be)=1-ab$.
In this case $u^{-1}wv$ represents a weight element 
for the corresponding extension.

\section{twist spins}

The knot manifolds of branched twist spins of classical knots
are Seifert fibred, and their groups have weight elements 
such that some power is central. 
If $B=S^2(a_1,\dots,a_m)$ is the base of a Seifert fibration of a
$M(K)$ for some $r$-twist spin $K$ then it follows from  
\cite[Theorem 4.8.1]{ZVC} that $m=3$, and
$B=S^2(p,q,r)$ with $(p,q)=1$.
(See \cite[Lemma 16.7]{Hi}.)
A similar argument shows that if  $\mathbb{D}(c_1\dots,c_p,\overline{d_1},\dots,\overline{d_q})$
and $G=\pi^{orb}B$ has a weight element of finite order $r$ 
then $p=0$ and $r=2$.
The orbifolds $S^2(p,q,r)$ with $(p,q)=1$ and
$\mathbb{D}(\overline{d_1},\dots,\overline{d_q})$
are realized by $r$-twist spins of $(p,q)$-torus knots
and 2-twist spins of Montesinos knots, respectively.

On the other hand, if $B=P^2(b_1,\dots,b_n)$ for some $n\geq2$
then all elements of finite order in $G=\pi^{orb}B$ have order 
dividing one of the exponents $b_i$.
Since $G/G'$ has order $2\Pi{b_i}$, 
there are no weight elements of finite order,
and so these orbifolds are not realized by twist spins.
Are any of them realized by other constructions based on classical knots?

In general, we do not know whether orientable Seifert fibred 3-manifolds $N$
with $H_1(N;\mathbb{Z})\cong\mathbb{Z}$  and $\pi_1(N)$ having weight 1 are
knot manifolds.

\section{the unsettled cases}

If a group has weight 1 then so does every quotient.
Thus to show that the groups considered in \S2 above exhaust the 2-orbifold groups of weight 1 it would suffice to eliminate certain special cases.

Each of the groups $\pi^{orb}S^2(a_1,\dots,a_m)$ (with $m\geq5$) or
$\pi^{orb}\mathbb{D}(c_1\dots,c_p,\overline{d_1},\dots,\overline{d_q})$
(with $q\geq1$) maps onto $\pi^{orb}S^2(a_1,\dots,a_5)$ or
$\pi^{orb}\mathbb{D}(c_1\dots,c_p,\overline{d_1})$,
respectively.
If $\{a_1,a_2,a_3,a_4\}$ is a quadruple such that 
no three terms have a common factor,
but with no partition into two pairs of relatively prime integers 
then $\pi^{orb}S^2(a_1,a_2,a_3,a_4)$ has a quotient of one of the forms 
$\pi^{orb}S^2(a,b,c,abc)$ or $\pi^{orb}S^2(bc,ac,ab,d)$,
where $a,b,c,d$ are distinct primes.

Thus we may concentrate on the following cases:
\begin{enumerate}
\item$S^2(a_1,\dots,a_5)$, with 
no three of the $a_i$ having a nontrivial common factor, 
and at most two disjoint pairs having a common factor;
\item$S^2(a,b,c,abc)$, with $a,b,c$ distinct primes;
\item$S^2(bc,ac,ab,d)$, with $a,b,c,d$ distinct primes;
\item$P^2(2,b,c)$, with $b,c$ odd and $(b,c)=1$, or $P^2(3,4,5)$;
\item$\mathbb{D}^2(a,b,\overline{d})$, with $(a,b)=1$.
\end{enumerate}
In cases (4) and (5) we could assume further that $a,b$ and $c$
are distinct primes.

In each of the cases (2), (3) and (5) the group $G=\pi^{orb}B$ may be obtained from a group of weight 2 by adjoining one relation:
\[
\pi^{orb}S^2(a,b,c,abc)\cong|\langle{v,w,x}\mid {v^a=w^b=x^c=1}\rangle|/\langle\langle(vwx)^{abc}\rangle\rangle,
\]
\[
\pi^{orb}S^2(bc,ac,ab,d)\cong|\langle{v,w,x}\mid {v^{ac}=w^{ab}=x^d=1}\rangle|/\langle\langle(vwx)^{bc}\rangle\rangle,
\]
and
\[
\pi^{orb}\mathbb{D}^2(a,b,\overline{d})\cong|\langle{v,w,x}\mid {u^a=v^b=x^2=1}\rangle|/\langle\langle[x,uv]^d\rangle\rangle.
\]
Thus showing that $G$ does not have weight 1 might be quite difficult.

On the other hand,
a natural extension of the Scott-Wiegold conjecture (raised in \cite{Ho})
is that the free product of $2k+1$ cyclic groups
should have weight $>k$.
If this is so, then $\pi^{orb}S^2(a_1,\dots,a_m)$ has weight $>1$ if $m\geq5$.
Similarly, the hypothesis ``$w(G*\mathbb{Z})>w(G)$"
is an extension of the Kervaire-Laudenbach Conjecture.
If $a,b,c$ are pairwise relatively prime then the group  $G=\mathbb{Z}/a\mathbb{Z}*\mathbb{Z}/b\mathbb{Z}*\mathbb{Z}/c\mathbb{Z}$
with presentation $\langle{v,w,x}|v^a=w^b=x^c=1\rangle$ has weight $2$.
Since $\pi^{orb}P^2(a,b,c)$ may be obtained from $G$ by adjoining 
one generator $u$ and one relation ($u^2=vwx$),
it has weight at least $w(G*\mathbb{Z})-1$.
If the extended Kervaire-Laudenbach Conjecture holds 
then $\pi^{orb}P^2(a,b,c)$ has weight 2.
Thus it is quite plausible that none of the 2-orbifolds
$S^2(a_1,\dots,a_5)$ or $P^2(a,b,c)$
may be realizable in our context.

\smallskip
{\it Acknowledgment.} This work was begun as a consequence of 
the BIRS conference on ``Unifying Knot Theory in Dimension 4", 
4-8 November 2019.


\end{document}